\pdfoutput=1
\documentclass[utf8,12pt,english,natbib,upmath]{aclart}

\selectlanguage{english}

\usepackage{aclraccourcis,hyperref}

\usepackage{xcolor}
\definecolor[named]{lipicsLightGray}{rgb}{0.85,0.85,0.86}

\definecolor{keywordcolor}{rgb}{0.7, 0.1, 0.1}   % red
\definecolor{tacticcolor}{rgb}{0.0, 0.1, 0.3}    % dark blue
\definecolor{commentcolor}{rgb}{0.4, 0.4, 0.4}   % grey
\definecolor{stringcolor}{rgb}{0.5, 0.3, 0.2}    % brown
\definecolor{symbolcolor}{rgb}{0.1, 0.2, 0.7}    % blue
\definecolor{sortcolor}{rgb}{0.1, 0.5, 0.1}      % green
\definecolor{attributecolor}{rgb}{0.7, 0.1, 0.1} % red
\definecolor{errorcolor}{rgb}{1, 0, 0}           % bright red

\usepackage{listings}
\lstset{basicstyle=\small\ttfamily,%
        backgroundcolor=\color{lipicsLightGray},%
        frame=single,framerule=0pt,xleftmargin=\fboxsep,xrightmargin=\fboxsep}

\lstloadlanguages{lean}
\lstset{language=lean}
\usepackage{upgreek}
% \usepackage{caption}
% \DeclareCaptionFormat{lstlisting}{#1#2#3}
% \captionsetup[lstlisting]{format=lstlisting,justification=raggedright}

\title[An experiment in formalization of an intermediate-level algebra theorem]{Formalizing the proof of an intermediate-level algebra theorem
--- An experiment} 
\author{Antoine Chambert-Loir}
\address{Université Paris Cité \\
 Institut de mathématiques de Jussieu --- Paris Rive Gauche \\
8 place Aurélie Nemours \\
75013 Paris}
\email{antoine.chambert-loir@u-paris.fr}

\begin{abstract}
Proof assistants are computer softwares that allow us to write mathematical
proofs so as to assess their correctness. 
In November~2021, I started the project  of checking 
the simplicity of the alternating groups within the \texttt{Lean}
theorem prover
and its \texttt{mathlib} library. This text aims at reviewing
this experiment.
\end{abstract}

\begin{altabstract}
Les assistants de preuves sont des logiciels qui permettent
de rédiger des démonstrations mathématiques 
et d'en garantir leur correction. 
En novembre 2021, j'ai débuté un projet de vérification
de la simplicité des groupes alternés au sein de l'assistant
de preuve \texttt{Lean},
et de sa librairie \texttt{mathlib}.
Ce texte est un essai de compte rendu de cette expérience.
\end{altabstract}

\begin{document}

\maketitle

\section{Introduction}

Human mathematics is written in  plain language, and we all know examples
of shortcomings that lead to “proofs” of wrong results. 
We also know for now more than
hundred years ago, notably by the works of \cite{Peano-1889}
or \cite{WhiteheadRussell-1927},
that mathematics can be written using axiomatic systems,
and, at least in principle,
in a rigid syntactic way, so as to avoid such problems,
at least if the chosen axiomatic system does not lead to contradiction.
I write “in principle”, because this rigid syntactic writing is extremly
verbose: It took hundreds of pages to Whitehead and Russell
to prove that $1+1=2$.
One may find a pleasant, large audience, account of this quest
in the comic book \cite{DoxiadisPapadimitriou-2009}.

Since the 1950s, the development of computers led mathematicians
to propose to use their mechanical force to develop
fully formalized proofs of the mathematical corpus.
Among such examples, let us mention N.~G. De Bruijn's \texttt{Automath}
(1967), A. Trybulec's \texttt{Mizar} (1973), 
G. Huet's team project \texttt{Coq} (1989), 
C. Coquand's \texttt{Agda} (1999) or L. de Moura's \texttt{Lean} (2013)…

In recent years, these softwares have allowed us to check delicate
parts of the mathematical corpus:
Appel and Haken's proof the Four color theorem
(the regions delimited by a finite planar graph can be colored  
in four colors so that any two neighboring regions
have different colors);
Feit and Thompson's proof of the  Odd order theorem
(any finite group of odd cardinality is solvable),
by~\cite{Gonthier-2008}
and~\cite{GonthierAspertiAvigadEtAl-2013},
(both in \texttt{Coq});
Hales's proof  of the Kepler conjecture
(the standard, “cubic close”, sphere packing is the densest one), 
by~\cite{HalesAdamsBauerEtAl-2017} (in \texttt{HOL Light});
following the challenge of~\citet{Scholze-2022},
the proof of a delicate homological algebra result of Clausen and Scholze
(in \texttt{Lean}, the so called “Liquid tensor experiment”, 2022, by \textsc{Commelin} and \textsc{Topaz}, with the help of many more people involved);
or Gromov's proof of the h-principle and the sphere eversion theorem 
by \citet*{MassotvanDoornNash-2022}, 
also in \texttt{Lean}.

Actually, the latter results were not formalized in plain \texttt{Lean},
but were built on the Lean mathematical library \texttt{mathlib}.
Led by a group of approximately 25 people,
plus some 15 reviewers, 
this mathematical library is an ongrowing effort 
of roughly 300 people, with (as today) approximately 45\,000 definitions
and 110\,000 mathematical statements (“theorems”) that cover many
fields of mathematics, such as additive combinatorics, 
complex variables, differential geometry and Lebesgue integration…
So that a collective effort is at all possible, the initial
authors of~\texttt{mathlib} had to make careful 
architecture and design decisions, 
described in~\citep{ThemathlibCommunity-2020}.
As \texttt{Lean}/\texttt{mathlib}
 is an open source project, 
it is also relatively easy to install it
on one's own computer, and start joining this collective effort.
This is also facilitated by a comprehensive website and
an online discussion board where contributors share
their problems and, remarkably generously, insights.

In November~2021, 
I embarked in checking in \texttt{Lean}/\texttt{mathlib}
the proof that the alternating group of a finite set of cardinality
at least~5 is a simple group.
While this  mathematical result is of a  smaller scale,
compared to the above-quoted accomplishements, 
it belongs to the classical (under)graduate mathematical corpus,
and I felt interesting to experiment the formalization process on
a result of this intermediate level. 
For reasons I will try to share, I chose a nonstandard way to do that,
that led me to unsuspected mathematical territories.

This text is a retrospective account of this journey.

\bigskip

\emph{
I thank Javier Fresán for having inviting me to write this paper,
a Spanish version of which appeared in
 \emph{La Gaceta de la Real Sociedad Matemática Española},
Vol. 26 (2023), Núm. 3, Págs. 535–553.
I also thank him, together with Riccardo Brasca, Patrick Massot and Filippo Nuccio,
for their comments, as well as Martin Liebeck and Raphaël Rouquier for their help.
I am grateful to the \emph{mathlib} community for their 
enthusiasm in welcoming newcomers to the game,
and for the support they provide so generously.}

\section{Solvability, simplicity}

Let us first recall the terms of the statement we have in mind.

\begin{theo}\label{theo.an-simple}
Let $n$ be an integer such that $n\geq 5$. 
The alternating group $\mathfrak A_n$ is a \emph{simple} group.
\end{theo}
(For $n\leq 2$, the group $\mathfrak A_n$ is trivial;
for $n=3$, it is cyclic of order~3; 
for $n=4$, it is a nonabelian solvable group of order~12,
its derived subgroup is an abelian subgroup of index~3.)

In the \texttt{Lean} language, this theorem can be formulated as follows:

\begin{lstlisting}[caption={Simplicity of the alternating groups
of order at least five
% \mllink\mllink{number_theory/well_approximable.lean\#L191}
},label=lst:simple,captionpos=t,float=htb]
theorem alternating_group.normal_subgroups {α : Type*}
  [decidable_eq α] [fintype α]
  (hα : 5 ≤ fintype.card α)
  {N : subgroup (alternating_group α)}
  (hnN : N.normal) (ntN : nontrivial N) : N = ⊤
\end{lstlisting}

The command \lstinline!theorem! initiates a statement of a theorem,
followed by the name given to it, here \lstinline!alternating_group.normal_subgroups!, and followed by a sequence of arguments which
are surrounded by various kinds of parentheses.

The first of these arguments, \lstinline!α!, is declared as a \emph{type},
the basic notion  of “dependent type theory”,
the formal language of~\texttt{Lean}: 
in this case, one may think of \lstinline!α! as a set.
The next arguments impose that it is also finite, and 
 \lstinline!hα! is the assumption that it has at least five members.
The next three parameters are \lstinline!N!, which is declared as a subgroup
of the alternating group on \lstinline!α!, \lstinline!hnN! which imposes that 
it is a normal subgroup, and \lstinline!ntN! that it is nontrivial
(which, in the \texttt{mathlib} library, means that
it is not reduced to the unit element).

The conclusion of that theorem follows the colon symbol: \lstinline!N = ⊤!,
meaning that \lstinline!N! is the full group of the alternating group.
In the actual code, this 5-line text is followed by the symbol \lstinline!:=! 
and the actual \emph{proof} of this statement.

Every object of type theory is a type,
and what \texttt{Lean} does is allowing the user to write down
new types, or members of those types. For example, in 
the example above \lstinline!N! is a member 
of the type \lstinline!subgroup (alternating_group α)!. 
\texttt{Lean} provides a few
basic means to define new types from older; for example,
if \lstinline!α! and \lstinline!β! are types, there is a type
\lstinline!α → β! which represents “functions” from \lstinline!α!
to \lstinline!β!, in the sense that if \lstinline!f : α → β!
is such a function and \lstinline!a : α! (read: “\lstinline!a! is a member
of the type \lstinline!α!”), then \lstinline!f a!
is a member of the type~\lstinline!β!, with obvious rules
regarding equality. Functions of multiple arguments can be defined
“à la Curry”: for example, if \lstinline!α!, \lstinline!β! and \lstinline!γ!
are types, then \lstinline!f : α → β → γ! maps \lstinline!a : α!
to \lstinline!f a : β → γ!, which maps \lstinline!b : β!
to \lstinline!f a b : γ!, etc.
Even the expression \lstinline!N = ⊤! of listing~\ref{lst:simple}
designates a type,
namely, the type of proofs of equality between the two members 
\lstinline!N!  and \lstinline!⊤! of the type \lstinline!subgroup (alternating_group α)!,
and the (omitted) code that follows constructs a member of that type,
that is, a proof of that statement:
type theory puts mathematical structures and theorems on the same level.

\medskip

Simple groups are those (nontrivial) groups
whose only normal subgroups are the two 
obvious examples, the full group and trivial group~$\{e\}$.
When a nontrivial group~$G$ is not simple, it admits
a normal subgroup~$H$ such that $H\neq \{e\}$ and $H\neq G$;
then one can (try to) study~$G$ through its projection to
the quotient group~$G/H$, whose kernel is~$H$.
When we restrict ourselves to finite groups, a full “dévissage”
is possible and a common metaphor presents finite simple groups 
as the “elementary particles” of finite group theory.
In this direction, a legendary theorem whose proof 
involved hundreds of mathematicians and hundreds of
mathematical papers written over a period of 50 years,
is the classification of finite simple groups: All finite simple
groups appear in a list of groups of the following form:
\begin{itemize}
\item The cyclic group $\Z/p\Z$, for some prime number~$p$;
\item The alternating group~$\mathfrak A_n$, for some integer $n\geq 5$;
\item Lists of finite groups “of Lie type”, related
to linear algebra over finite fields, whose easiest
examples consist of the projective special linear groups
$\mathrm{PSL}(n,\F_q)$ over a finite field of cardinality $q$,
assuming $q\geq 4$ if $n=2$ ($\mathrm{PSL}(2,\F_2)$
and $\mathrm{PSL}(2,\F_3)$ are respectively isomorphic
to $\mathfrak S_3$ and $\mathfrak A_4$, hence are not simple);
\item A list of~26 (so called “sporadic”) groups, related
to exceptional combinatorial geometries, 
such as the Matthieu groups~$\mathrm M_{11}$, $\mathrm M_{12}$,
$\mathrm M_{22}$, $\mathrm M_{23}$ and $\mathrm M_{24}$.
\end{itemize}

The difficult part of the classification of finite simple groups
 asserts that those finite groups are the only simple groups,
but we are only concerned here by the easy part of the classification,
that these groups are indeed simple.

The first ones, cyclic groups of prime order, are simple:
it follows from Lagrange's theorem (the order of a subgroup
divides the order of the group)
that they have no other subgroup than themselves and the trivial
subgroup.

As an aside remark, let us note that the center~$Z(G)$
of a group~$G$, the set of elements~$g\in G$ which commute
with any other element of~$G$, is also a normal subgroup.
Consequently, if $G$ is simple, then either $Z(G)=G$,
in which case $G$ is commutative, hence  a cyclic group of prime order, 
or $Z(G)=\{e\}$.
This explains why, from the second item on,
all groups of the above list have trivial center.

On the second item of that list come the alternating groups, 
which are the very subject
of this note, and whose simplicity is often established
in algebra lectures related to Galois theory
and the solvability of algebraic equations (in one variable).
While Abel and Ruffini had proved that 
general algebraic equations of degree $\geq 5$ cannot
be solved by radicals, Galois's theorem refines
that result by proving that a given algebraic equation
is solvable by radicals if and only if its Galois group
is solvable. The notion of a group of an equation was introduced 
by Galois, as well as the notion of a normal subgroup
and of solvable group, although
he did not give a name to these two concepts: 
the Galois group is the subgroup of the permutations
of the roots that preserve all algebraic relations
with rational coefficients;
and a finite group~$G$ is solvable if it is
trivial or if, by induction, it admits
a nontrivial normal subgroup~$H$ which is itself solvable
and such that the quotient group~$G/H$ is commutative.
In modern terms, we say that a group~$G$ is solvable
if its “derived series” $G, D(G), D(D(G))\dots$,
the decreasing sequence of subgroups
obtained by successively taking commutator subgroups,
eventually reaches the identity subgroup.

In that perspective, the Abel--Ruffini theorem boils down
to the fact that a general equation of degree~$n$
has Galois group the full symmetric group~$\mathfrak S_n$, 
and that, for $n\geq 5$, it is not solvable,
itself a direct consequence of the following more precise result.

\begin{prop}
Let $n$ be an integer such that $n\geq 5$.
The commutator subgroup of $\mathfrak S_n$ 
is the alternating group $\mathfrak A_n$.
The commutator subgroup of the alternating group~$\mathfrak A_n$
is itself.
\end{prop}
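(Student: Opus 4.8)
The plan is to reduce both assertions to a single structural fact: the alternating group $\mathfrak A_n$ is generated by its $3$-cycles. Granting this, computing a commutator subgroup $D(G)=[G,G]$ splits into two inclusions. For the inclusion of $D(G)$ into the target group, one exhibits a homomorphism out of $G$ with abelian image. For the reverse inclusion, one shows that every $3$-cycle is a commutator of elements of $G$; then the subgroup generated by the $3$-cycles, namely $\mathfrak A_n$, lies inside $D(G)$.

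For the first assertion, the sign homomorphism $\varepsilon\colon\mathfrak S_n\to\{\pm1\}$ has abelian (indeed cyclic of order two) image, so $D(\mathfrak S_n)\subseteq\ker\varepsilon=\mathfrak A_n$. For the reverse inclusion I would exhibit each $3$-cycle as a commutator of two transpositions; a direct computation gives
\[
(a\,b\,c)=\bigl[(b\,c),(a\,b)\bigr]
\]
(writing $[x,y]=xyx^{-1}y^{-1}$), which already belongs to $D(\mathfrak S_n)$. Since the $3$-cycles generate $\mathfrak A_n$, this yields $\mathfrak A_n\subseteq D(\mathfrak S_n)$, hence equality. This part in fact works for all $n\geq3$.

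The second assertion --- that $\mathfrak A_n$ is perfect --- is where the hypothesis $n\geq5$ becomes essential, since we now need each $3$-cycle to be a commutator of \emph{even} permutations. Here I would use the room afforded by having at least five symbols: given a $3$-cycle $(a\,b\,c)$, choose two further distinct symbols $d,e$ (possible precisely because $n\geq5$), and check by direct computation that
\[
(a\,b\,c)=\bigl[(a\,d\,c),(c\,b\,e)\bigr].
\]
Both factors on the right are $3$-cycles, hence even, so this displays every $3$-cycle as an element of $D(\mathfrak A_n)$; as the $3$-cycles generate $\mathfrak A_n$, we conclude $D(\mathfrak A_n)=\mathfrak A_n$. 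An alternative would be to exhibit a single $3$-cycle as a commutator of two $3$-cycles and then invoke the fact that all $3$-cycles are conjugate in $\mathfrak A_n$ for $n\geq5$ (using that a conjugate of a commutator is again a commutator), but the explicit formula above sidesteps any appeal to conjugacy.

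The genuinely load-bearing step, and the one I expect to cost the most effort --- especially in a formal setting --- is the generation of $\mathfrak A_n$ by $3$-cycles: an even permutation is a product of an even number of transpositions, and one must check that each product of two transpositions, whether they overlap or are disjoint, rewrites as a product of $3$-cycles. The remaining difficulty is pure bookkeeping: carrying out the cycle computations in the two displayed identities, and making sure, in the second one, that the auxiliary symbols $d,e$ are genuinely available, which is exactly the content of $n\geq5$. It is worth recalling that this restriction is sharp for perfectness, since $\mathfrak A_4$ is not perfect, its commutator subgroup being the Klein four-group.
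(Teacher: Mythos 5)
Your proof is correct, and the first assertion is handled exactly as in the paper: the sign homomorphism gives $D(\mathfrak S_n)\subseteq\mathfrak A_n$, and writing each $3$-cycle as a commutator of two overlapping transpositions gives the reverse inclusion via generation by $3$-cycles. Where you genuinely diverge is the perfectness of $\mathfrak A_n$. The paper works in the abelianization $K=\mathfrak A_n/D(\mathfrak A_n)$: the images of the $3$-cycles generate $K$, they all coincide with a single element $k$ because all $3$-cycles are conjugate in $\mathfrak A_n$ when $n\geq5$, and since the square of a $3$-cycle is again a $3$-cycle one gets $k=k^2$, hence $k=e$. You instead exhibit each $3$-cycle directly as a commutator of two $3$-cycles, $(a\,b\,c)=\bigl[(a\,d\,c),(c\,b\,e)\bigr]$, which I have checked is correct (under the convention that permutations compose right-to-left; under the opposite convention one would need to adjust the factors, but some identity of this shape always works). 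The trade-off: your route needs no lemma about conjugacy of $3$-cycles in $\mathfrak A_n$ and no reasoning in the quotient group --- the hypothesis $n\geq5$ enters only through the existence of the auxiliary symbols $d,e$ --- which makes it shorter and arguably friendlier to formalization; the paper's route is less computational and isolates the conjugacy of $3$-cycles, a fact it reuses elsewhere, together with the standard ``$k=k^2$'' trick in the abelianization. Both arguments rest on the same load-bearing fact you correctly identify, namely that the $3$-cycles generate $\mathfrak A_n$.
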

\begin{proof}
Any commutator has signature~$1$, so that $D(\mathfrak S_n)\subseteq \mathfrak A_n$. On the other hand, a commutator of two transpositions
$(a\,b)$ and $(c\,d)$ is trivial if they are equal or have disjoint supports,
but is equal to a 3-cycle otherwise, as the following computation shows
\[ (a\,b)(c\,a)(a\,b)(c\,a)=(a\, b\,c). \]
We see that any 3-cycle can we written as a commutator, so that
$D(\mathfrak S_n)$ contains all 3-cycles, 
which are known to generate the alternating group~$\mathfrak A_n$.
(This works for $n\geq 3$.)

To prove that $D(\mathfrak A_n)$ is $\mathfrak A_n$ itself,
we prove that the quotient group $K=\mathfrak A_n/D(\mathfrak A_n)$
is trivial. The group~$\mathfrak A_n$ is generated by 3-cycles~$g$,
so their images generate~$K$.
The hypothesis $n\geq 5$ implies that all 3-cycles
are conjugate in~$\mathfrak A_n$; consequently,
they all have the same image in~$K$, say~$k$, and $K=\langle k\rangle$.
Since the square of a 3-cycle $g=(a\,b\,c)$ is again a 3-cycle,
namely $g^2=(a\,c\,b)$,
one has $k=k^2$, hence $k=e$ and $K=\{e\}$.
\end{proof}

The relation with simplicity is that 
noncommutative solvable groups cannot be simple.
In fact, it is an elementary observation
that the commutator subgroup $D(G)$ of any group~$G$ is a 
normal subgroup of~$G$; if $G$ is simple,
then either $D(G)=\{e\}$, which means that $G$ is commutative,
or $D(G)=G$.
So Galois's theorem on algebraic equations of degree~$\geq 5$
is often subsumed in mentioning that the alternating group~$\mathfrak A_n$
is simple for $n\geq 5$, 
although the result which is needed is the easier proposition given above.

It is sometimes written that Galois proved that simplicity theorem,
although the only explicit statement I could find in his works is
the fact that the smallest possible cardinality of a simple (noncommutative)
finite group
(he says “indecomposable”) is $5\cdot 4\cdot 3$, 
but he does not state that it corresponds to the
alternating group~$\mathfrak A_5$.  
On the other hand, group theorists of the 19th century, from Lagrange and
Ruffini to Jordan, gradually built the tools to understand
Galois's theorem in terms of the simplicity of the alternating group.

There are many relatively easy proofs of the simplicity of~$\mathfrak A_n$
for $n\geq 5$, such as, for example, 
the one given by~\cite[p.~247]{Jacobson-1985},
but none of them looks as being completely straightforward,
in the sense that they do not tell \emph{why} they work.
Moreover, some of them build on case disjunctions,
or mental reasonings which, although they are quite familiar to us,
remain a bit awkward to specify exactly, to the point
that I am not even sure that our explanations suffice to our students.

So my initial idea was to find a proof that would be of a more
systematic nature, using arguments that are more prone
to generalization. The principle of such a proof,
already hinted to in the book of~\cite{Wilson-2009},
is given by the Iwasawa criterion,
to which I now turn.

\section{The Iwasawa criterion for simplicity}

\cite{Iwasawa-1941} proposed a
proof of the simplicity
of the projective special linear group $\mathrm{PSL}(n,F)$
of a field $F$ of cardinality at least~$4$.
Before that, this theorem was limited
to the case of finite fields (Dickson) or fields of characteristic $\neq 2$
(van der Waerden).
From that proof, the following geometric criterion 
can be extrated.

\begin{theo}\label{theo.iwasawa}
Let a group~$G$ act on a set~$X$, and  assume that we are given,
for every $x\in X$, a subgroup~$T(x)$ of~$G$, such that 
the following properties hold:
\begin{itemize}
\item For every $x\in X$, the group~$T(x)$ is commutative;
\item For every $g\in G$ and every $x\in X$, one has $T(g\cdot x)
= g T(x)g^{-1}$;
\item The groups $T(x)$ generate~$G$.
\end{itemize}
If, moreover, the action of~$G$ on~$X$ is \emph{quasiprimitive},
then any normal subgroup~$N$ of~$G$ that acts
nontrivially on~$X$ contains the commutator subgroup~$D(G)$ of~$G$.
\end{theo}

An action of a group~$G$ on a set~$X$ is 
said to be \emph{quasiprimitive} if any normal subgroup
of~$G$ which acts nontrivially on~$X$ acts transitively.
This property may look obscure,
but it appears naturally in the framework of 
\emph{primitive}  actions,
a classic theme of 19th century group theory
which remained very important in
finite group theory but seems to have disappeared
from the algebra package we offer to undergraduate students. 
Let us define it
in terms of partitions of~$X$ (sets of nonempty disjoint subsets
of~$X$ whose union is~$X$):
\begin{defi}
A transitive action of a group~$G$ on a set~$X$ is \emph{primitive}
if there are exactly two partitions of~$X$ which are invariant under~$G$,
the coarse partition $\{X\}$ and the discrete partition 
consisting of all singletons.
\end{defi}

In particular, this definition implies that the
set~$X$ has at least two elements.

If $H$ is a normal subgroup of~$X$, then 
the partition of~$X$ in orbits of~$H$ is an invariant partition;
consequently, if the action of~$X$
is primitive, then either $H$ acts trivially, or it acts transitively:
for every $x,x'\in X$, there exists $h\in H$ such that $h\cdot x=x'$.
This shows that primitive actions are quasiprimitive.

Higher transitivity conditions give important examples of primitive actions.
\begin{lemm}
Let us assume that the action of~$G$ is 2-fold transitive: $X$ has
at least two elements and
for any two pairs $(x,y)$ and $(x',y')$ of distinct elements of~$X$,
there exists $g\in G$ such that $g\cdot x= x'$ and $g\cdot y=y'$.
Then  this action is primitive.
\end{lemm}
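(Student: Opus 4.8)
The plan is to show that a 2-transitive action is transitive (immediate) and that it admits only the two trivial invariant partitions. Transitivity follows directly from 2-transitivity: given $x, x' \in X$, pick any $y \neq x$ and any $y' \neq x'$ (possible since $X$ has at least two elements), and the hypothesis yields $g$ with $g \cdot x = x'$. So the real content is the primitivity, namely ruling out any nontrivial proper invariant partition.

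First I would set up the argument by contradiction: suppose $\mathcal{P}$ is a $G$-invariant partition that is neither the coarse partition $\{X\}$ nor the discrete partition of singletons. Since $\mathcal{P}$ is not discrete, some block $B \in \mathcal{P}$ contains two distinct elements $x, y$. Since $\mathcal{P}$ is not coarse, $B \neq X$, so there is an element $z \in X \setminus B$, and in particular $z \neq x$ and $z \neq y$.

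Next I would exploit 2-transitivity to derive a contradiction. The pairs $(x, y)$ and $(x, z)$ are each made of distinct elements, so there exists $g \in G$ with $g \cdot x = x$ and $g \cdot y = z$. Because the partition is $G$-invariant, $g$ permutes the blocks, and the block containing $x$ is sent to the block containing $g \cdot x = x$; since blocks are either equal or disjoint, $g \cdot B$ is again the block through $x$, that is $g \cdot B = B$. But then $z = g \cdot y \in g \cdot B = B$, contradicting the choice $z \notin B$.

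I expect the main subtlety to be purely bookkeeping rather than deep: one must be careful that the element $z$ can be chosen outside $B$ while $x, y$ lie inside it, which is exactly what the two failure modes of $\mathcal{P}$ guarantee (non-discreteness gives the pair inside a block, non-coarseness gives the outside point). The only genuine fact used about invariance is that $G$ maps blocks to blocks, so that fixing a point of a block forces the whole block to be preserved; once this is in place the contradiction is immediate, and no case analysis beyond choosing the three points is needed.
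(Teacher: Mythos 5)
Your proof is correct and follows essentially the same route as the paper's: apply 2-transitivity to the pairs $(x,y)$ and $(x,z)$ to obtain $g$ fixing $x$ and sending $y$ to $z$, conclude $g\cdot B=B$ since the blocks share $x$, hence $z\in B$. The only cosmetic difference is that you phrase it as a contradiction with $z$ chosen outside $B$, whereas the paper lets $z$ be arbitrary and concludes directly that $B=X$.
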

The proof is elementary: consider an
element~$B$ of a partition~$\Sigma$ of~$X$ 
which has at least two elements $x,y$ and let us show that $B=X$.
Let $z\in X$ be such that $z\neq x,y$. By 
the 2-fold transitivity condition applied to $(x,y)$ and $(x,z)$, 
there exists $g\in G$ such that $g\cdot x=x$ and $g\cdot y=z$. 
The set $g\cdot B$ belongs to~$\Sigma$ but has a common point with~$B$,
namely~$x$, 
so that $g\cdot B= B$. In particular, $z\in B$. This proves that $B=X$.

We just observed that members of a $G$-invariant partition
are subsets~$B$ of~$X$ such that either $g\cdot B\cap B=\emptyset$
or $g\cdot B=B$; 
in the traditional terminology  of group theory,
they are called \emph{blocks},
and blocks which are neither empty, nor singletons,
nor the full sets are called \emph{blocks of imprimitivity}.
Conversely,
if $B$ is a nonempty block and if the action
is transitive, then the set of all $g\cdot B$,
for $g\in G$, gives a $G$-invariant partition of~$X$.

As an example of a transitive, but not primitive action, 
one may consider the action of $\mathfrak S_4$
on the set of pairs of elements of $\{1,2,3,4\}$:
in this case,  there are nontrivial blocks,
such as $B=\{\{1,2\},\{3,4\}\}$. 
In fact, we will have to meet this example later, and some variants of it.

The terminology “primitive” comes from Galois, in the language
of equations: as explained by~\cite[p.~390]{Neumann-2006},
when the Galois group $G$ of an irreducible polynomial 
equation~$f(x)=0$
acts on its roots, there are $m$~blocks of size~$n$
if and only if there is an auxiliary equation of degree~$m$ 
the adjunction of one root of which allows~$f$ to be factored
as $f_1f_2$, where $f_1$ has degree~$n$.

The \texttt{Lean} definitions follow these descriptions, see listing~\ref{lst:blocks}, with a few adjustments to follow the general \texttt{mathlib} conventions.

\begin{lstlisting}[caption={Blocks, primitive actions},label=lst:blocks,captionpos=t,float=htb]
variables (G : Type*) {X : Type*} [has_smul G X]

/-- A trivial block is a subsingleton or ⊤ (it is not necessarily a block…)-/
def is_trivial_block (B : set X) := B.subsingleton ∨ B = ⊤

/-- A block is a set which is either fixed or moved to a disjoint subset -/
def is_block (B : set X) := (set.range (λ g : G, g • B)).pairwise_disjoint id

/-- An action is preprimitive if it is pretransitive and
the only blocks are the trivial ones -/
class is_preprimitive 
extends is_pretransitive G X : Prop :=
(has_trivial_blocks' : ∀ {B : set X}, (is_block G B) → is_trivial_block B)
\end{lstlisting}

First of all, definitions are always given under very minimal hypotheses,
one idea being that they could serve in more general contexts than
the ones that are generally considered, so as to avoid the need
for infinite variations of otherwise identical proofs.
Another principle to have definitions as general as possible
is that changing a definition later on requires
to adjust all theorems that refer to it, a painful and long task.
In our case, ``actions'' of a type \lstinline!G! on another type \lstinline!X!
just presumes a map \lstinline!G → X → X! embodied in the predicate \lstinline!has_smul G X!, and then denoted by the \mbox{symbol~\lstinline!•!},
not even requiring that \lstinline!G! has an inner multiplicative structure!
It is reminiscent of the “groups with operators” introduced
in the first chapter of \citep{zbMATH01194715} with a similar intention.

Then a “subset” \lstinline!B! of \lstinline!X! 
(something called \lstinline!set X!) is a block if and only if 
the sets \lstinline!g • B!, for \lstinline!g! in \lstinline!X!, are pairwise equal or disjoint.  The (possibly) obscure definition makes
use of \texttt{mathlib}'s general predicate \lstinline!set.pairwise_disjoint!.

Trivial blocks are detected by the predicate \lstinline!is_trivial_block!,
defined as either “subsingletons” (the empty set or a singleton,
with definition “$\forall x, y \in B, x =y$”)
or the full set \lstinline!⊤!.

Another \texttt{mathlib} idiosyncracy that appears in the definitions
above is the concept
of “pretransitive” actions, meaning “transitive but possibly empty”.
Again, the idea is to defer the non-emptiness hypotheses to
the statements that actually and explicitly need them. 
We thus define an action to be \lstinline!preprimitive! if 
it is pretransitive and if all blocks are trivial. 

In what follows,  it will be important to use 
the following equivalent characterization of primitive actions.
(Recall that the fixator~$G_x$ of an element~$x$ in~$X$
is the subgroup of~$G$ consisting of all $g\in G$ such that $g\cdot x=x$.)

\begin{lemm}\label{lemm.primitive-maximal}
The action of~$G$ on~$X$ is primitive if and only if
it is transitive and if for every $x\in X$, 
its fixator~$G_x$ is a maximal\footnote{%
Recall that a subgroup~$H$ of~$G$ is maximal if $H\neq G$
and if any subgroup $H'$ of~$G$ containing~$H$ is~$H$ or~$G$.} 
subgroup of~$G$.
\end{lemm}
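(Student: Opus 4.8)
The plan is to prove both directions through the classical \emph{Galois-type correspondence} between the blocks passing through a fixed point and the subgroups lying between the fixator of that point and the whole group; primitivity will then be exactly the statement that this chain of subgroups has no proper intermediate term.

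First I would dispose of the transitivity clauses, which appear on both sides, and fix a single point. Recall that fixators along an orbit are conjugate, $G_{g\cdot x}=g\,G_x\,g^{-1}$, and that conjugation is an automorphism of~$G$, hence preserves maximality; so under transitivity the condition ``$G_x$ is maximal for every~$x$'' is equivalent to ``$G_x$ is maximal for some~$x$'', and it suffices to work with one chosen~$x\in X$. I would also observe that for a transitive action it is enough to control the blocks~$B$ with $x\in B$: any nonempty block can be translated by some $g\in G$ into a block $g\cdot B\ni x$, and translation by~$g$ sends blocks to blocks while preserving the properties of being empty, a singleton, or the full set; the empty block is a subsingleton, hence trivial. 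Thus every block is trivial if and only if every block containing~$x$ is trivial.

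The heart of the argument is the correspondence itself. To a block $B\ni x$ I associate its setwise stabilizer $H_B=\{g\in G : g\cdot B=B\}$, and to a subgroup $G_x\leq H\leq G$ the orbit $B_H=H\cdot x$. The key verifications are: that $G_x\leq H_B$ (if $g\cdot x=x$ then $x\in g\cdot B\cap B$, so the block property forces $g\cdot B=B$); that $B_H$ is indeed a block (if $g\cdot B_H$ meets $B_H$, say $g h_1\cdot x=h_2\cdot x$ with $h_1,h_2\in H$, then $h_2^{-1}g h_1\in G_x\leq H$, whence $g\in H$ and $g\cdot B_H=B_H$); and that the two constructions are mutually inverse and inclusion-preserving, the inclusion $B\subseteq H_B\cdot x$ being where transitivity is used. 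Under this correspondence the two endpoints match up as $\{x\}\leftrightarrow G_x$ and $X\leftrightarrow G$ (the latter by transitivity). Consequently the trivial blocks through~$x$ are precisely the two extremes, a nontrivial block corresponds to a strictly intermediate subgroup $G_x\lneq H\lneq G$, and since maximality presupposes $G_x\neq G$ — which is equivalent, for a transitive action, to $X$ having at least two elements, as demanded by primitivity — the two sides coincide.

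The main obstacle, and the only genuinely technical point, is checking that $B_H=H\cdot x$ is a block and that the two maps are inverse to one another; everything else is bookkeeping. In the formalization I expect the friction to come not from this algebra but from the \texttt{mathlib} encoding: reconciling the ``subsingleton \emph{or} $\top$'' definition of a trivial block with the classical ``singleton, empty, or full'' trichotomy, handling the empty-block corner case cleanly, and making rigorous the passage between ``\emph{every} block is trivial'' and ``every block \emph{through~$x$} is trivial'' via translation.
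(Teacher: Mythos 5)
Your proof is correct and follows essentially the same route as the paper, which establishes this lemma via the order-preserving bijection $H\mapsto H\cdot x$ between subgroups containing the fixator $G_x$ and blocks containing $x$ (Wielandt's theorem~7.5, formalized in the paper as \texttt{stabilizer\_block\_equiv}). Your reduction to a single point via conjugacy of fixators and translation of blocks, and your identification of the two extreme blocks with $G_x$ and $G$, match the paper's argument.
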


More generally, 
one can show that 
for any $x\in X$, the mapping $H\mapsto H\cdot x$ 
induces an order preserving bijection
from the lattice of subgroups~$H$ of~$G$ such that
$G_x\subseteq H\subset G$ to  the lattice of blocks~$B$ in~$X$
that contain~$x$. We copied in listing~\ref{lst:ordereq}
the \texttt{Lean} definition of this order preserving bijection
(in fact, its inverse):
it takes the form of an “order equivalence” of types, 
as indicated by the symbols \lstinline!≃o!.
The first type, 
\lstinline!{ B : set X // a ∈ B ∧ is_block G B }!, is the type
of all \lstinline!B : set X! (basically, subsets of \lstinline!X!)
satisfying the properties that \lstinline!a ∈ B! and \lstinline!is_block G B!,
the latter type encoding that \lstinline!B! is a block for the action of \lstinline!G! on \lstinline!X! (which could be left implicit, 
because \lstinline!B! being of type \lstinline!set X!, this type is known).
\texttt{Lean} is capable to guess by itself that this
type inherits the ordering relation given by inclusion on \lstinline!set X!.
The second type, \lstinline!set.Ici (stabilizer G a)!,
designates, in the lattice \lstinline!subgroup G!, 
the subset of those subgroups containing \lstinline!stabilizer G a!.
This “order equivalence” consists of two
functions,
\lstinline!to_fun! and \lstinline!inv_fun!,
proofs (\lstinline!left_inv! and \lstinline!right_inv!)
that they are inverse of each other, and a proof (\lstinline!map_rel_iff!)
that they respect the order. 
Then comes the definition of
the function \lstinline!to_fun!, which maps such a \lstinline!B!,
together with the witnesses \lstinline!ha : a ∈ B!
and \lstinline!hB : is_block G B!, to \lstinline!stabilizer G B!,
accompanied with \lstinline!stabilizer_of_block hB ha!.
As one can guess, the former designates the stabilizer of \lstinline!B!
in \lstinline!G!, together with the additional information
that it contains \lstinline!stabilizer G a!. 
That information is provided by the function 
\lstinline!stabilizer_of_block : is_block G B → a ∈ B → stabilizer G a ≤ stabilizer G B!
whose code, of course, had been given earlier in the source.
The inverse function \lstinline!inv_fun! maps
\lstinline!H : subgroup G! together with \lstinline!hH : stablizer G a ≤ H!
to \lstinline!mul_action.orbit H a! which represents the
orbit of \lstinline!a! under the action of the subgroup \lstinline!H!,
together with the relevant proofs that this 
set contains \lstinline!a! and is a block. 
Then come three proofs, 
\lstinline!left_inv! and \lstinline!right_inv! 
stating that the two preceding functions are inverse of each other, 
while \lstinline!map_rel_iff! states that they respect the order relation. 
In the listing showed there,
we replaced these three proofs by \lstinline!…!; the codes of the first
two ones are 2-line long, that of the third one is 17-line long.

\begin{lstlisting}[caption={Order equivalence between blocks
containing a point and subgroups containing its stabilizer},float=hbt,label=lst:ordereq,captionpos=t]
variables {G: Type*} [group G] {X : Type*} [mul_action G X]
/-- Order equivalence between blocks in X containing a point a
 and subgroups of G containing the stabilizer of a 
 (Wielandt, Finite Permutation Groups, th. 7.5)-/
def stabilizer_block_equiv [htGX : is_pretransitive G X] (a : X) :
  { B : set X // a ∈ B ∧ is_block G B } ≃o set.Ici (stabilizer G a) := {
to_fun := λ ⟨B, ha, hB⟩, ⟨stabilizer G B, stabilizer_of_block hB ha⟩,
inv_fun := λ ⟨H, hH⟩, ⟨mul_action.orbit H a,
  mul_action.mem_orbit_self a, is_block_of_suborbit hH⟩,
left_inv := …, 
right_inv := …, 
map_rel_iff' := …,
end }
\end{lstlisting}

\subsection{}
We end this section with a proof of the Iwasawa criterion
(theorem~\ref{theo.iwasawa}). %  and a comment on its code.

Fix a point $a\in X$. 
We first prove that the subgroup $\langle N, T(a)\rangle$ generated by~$N$
and~$T(a)$ is equal to~$G$.  By assumption, $N$
acts transitively on~$X$. Since $N$ is normal,
the hypothesis that the action is quasiprimitive implies that 
for every $b\in X$,
there exists $n\in N$ such that $n\cdot a=b$.
Since $n T(a) n^{-1} = T(b)$, this implies
that $\langle N,T(a)\rangle$ contains~$T(b)$.
Since $b$ is arbitrary, the subgroup $\langle N,T(a)\rangle$
contains the subgroup generated by all $T(x)$, for $x\in X$,
which, by assumption, is~$G$.

The subgroup~$N$ is normal; the desired conclusion that it is contains
the derived subgroup of~$G$ is equivalent to the
commutativity of the quotient $G/N$. Since $\langle N,T(a)\rangle = G$,
the composition $T(a) \to G \to G/N$ is surjective;
since $T(a)$ is commutative, we conclude that $G/N$ is commutative,
as we wished to.

\section{Normal subgroups of symmetric and alternating groups}

In this section, we consider an integer~$n$;
we generally assume that $n\geq 5$.
 
The symmetric group~$\mathfrak S_n$
acts not only on the set~$X=\{1,\dots,n\}$,
but also on the sets~$X^{[k]}$ of $k$-element subsets of~$X$,
for any integer~$k$ such that $0\leq k\leq n$.
The action is trivial if $k=0$ or $k = n$, because 
then $X^{[k]}$ is reduced to a single element,
but it is faithful otherwise: any element $g\neq e$
acts nontrivially.
The following proposition asserts that this action is moreover primitive,
unless $n=2k$.

\begin{prop}\label{prop.maximal}
Let $k$ and~$n$ be integers such that $0<k<n-k<n$.
If $4\leq n$, then the actions of~$\mathfrak A_n$  and~$\mathfrak S_n$
on~$X^{[k]}$ are primitive.
\end{prop}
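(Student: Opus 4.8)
The plan is to use the block characterization of primitivity developed above: an action is primitive exactly when it is transitive and its only blocks are the subsingletons and the whole set. Transitivity is the easy part. For $\mathfrak S_n$ it is immediate, and for $\mathfrak A_n$ one sends a $k$-subset to any other by some permutation and, if that permutation is odd, corrects its parity by a transposition supported in the complement of the target, which has $n-k\geq 2$ elements; so both groups act transitively. It then remains to show that any block $B$ with at least two elements is all of $X^{[k]}$. Fix $A\in B$. The organizing tool is \emph{Johnson adjacency}: call two $k$-subsets \emph{neighbours} when they differ in exactly one element, and recall the elementary fact that any two $k$-subsets are joined by a chain of successive neighbours.

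I would first isolate a connectivity lemma: if $B$ contains $A$ together with one neighbour of $A$, then $B=X^{[k]}$. The stabilizer $G_A$ acts transitively on the set of neighbours of $A$ --- for $\mathfrak S_n$ this is just the product action of $\mathfrak S_A\times\mathfrak S_{X\setminus A}$ on the pairs (element removed, element added), and for $\mathfrak A_n$ one again corrects parity, which is possible because under the hypotheses one of $A$, $X\setminus A$ has at least three elements. Since a neighbour of $A$ lies in $B$ and, being a common point, each element of $G_A$ fixes $B$ setwise (as $A\in gB\cap B$ forces $gB=B$), \emph{all} neighbours of $A$ lie in $B$. Propagating along a chain of neighbours --- each intermediate subset lies in $B$, hence its stabilizer fixes $B$, hence its neighbours lie in $B$ --- and invoking connectivity gives $B=X^{[k]}$.

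The crux is then to show that a block $B$ containing two distinct subsets $A,A'$ must already contain a neighbour of $A$. Set $j=|A\cap A'|<k$. Because $j<k$ there is $x\in A\setminus A'$, and --- this is exactly where the hypothesis $n>2k$ is indispensable --- because $|A\cup A'|=2k-j<n$ there is a point $y\notin A\cup A'$. The subset $A''=(A\setminus\{x\})\cup\{y\}$ is a neighbour of $A$, and the transposition $(x\,y)$ fixes $A'$ pointwise (both $x$ and $y$ avoid $A'$) while sending $A$ to $A''$. For $\mathfrak S_n$ this finishes the reduction: $A''\in G_{A'}\cdot A\subseteq B$, since $G_{A'}$ fixes $B$. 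Note that this step fails precisely when $A'=X\setminus A$, i.e. when $n=2k$, confirming that the excluded case $n=2k$ is the genuine source of imprimitivity.

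I expect the real friction to be the alternating group. There $(x\,y)$ is odd, so I must instead produce an \emph{even} permutation fixing $A'$ and carrying $A$ to $A''$: I would take $g=(x\,y)\,h$ with $h\in G_A\cap G_{A'}$ odd, that is, $h$ preserving each of the four sets $A\cap A'$, $A\setminus A'$, $A'\setminus A$, $X\setminus(A\cup A')$. A short case analysis on their sizes $j,\,k-j,\,k-j,\,n-2k+j$ shows that, under $4\leq n$ and $0<k<n-k$, at least one of them has two elements, so the required odd corrector $h$ exists; the only configuration obstructing it would force $n\leq 3$, which is excluded. Granting this, $A''\in B$ for $\mathfrak A_n$ as well, and the connectivity lemma concludes both cases at once. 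This bookkeeping of parity corrections, omnipresent when descending from $\mathfrak S_n$ to $\mathfrak A_n$, is exactly the sort of quick-on-paper reasoning that I anticipate being the most laborious part to formalize.
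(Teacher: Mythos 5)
Your proof is correct, but it takes a genuinely different route from the paper's. You argue directly with blocks in $X^{[k]}$: the Johnson adjacency structure (two $k$-subsets are neighbours when they differ in one element), the transitivity of the stabilizer $G_A$ on the neighbours of $A$, the connectedness of the Johnson graph to propagate membership in a block, and the key reduction producing a neighbour of $A$ inside the block from any second element $A'$ --- using a point $y\notin A\cup A'$, which exists exactly because $n>2k$. This is in effect the general-$k$ version of the elementary argument for $k=2$ attributed to G.~Chenevier in \S\ref{ss.prim-2}, and it settles affirmatively the paper's remark that ``it seems likely that an elementary proof exists for any~$k$''. The paper instead invokes lemma~\ref{lemm.primitive-maximal} to reduce primitivity on $X^{[k]}$ to the maximality of the stabilizer $(\mathfrak S_k\times\mathfrak S_{n-k})\cap\mathfrak A_n$, and proves maximality via Jordan's theorem (a primitive subgroup of $\mathfrak S_n$ containing a $3$-cycle contains $\mathfrak A_n$) together with a block analysis for the overgroup acting on $\{1,\dots,n\}$ itself. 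Your approach is more self-contained and arguably better suited to formalization, since it needs no external structure theory; the paper's buys a connection to the classification of maximal subgroups (O'Nan--Scott, Liebeck--Praeger--Saxl), which is part of its expository point. Two small things to tighten: in the parity-correction count, ruling out the obstruction gives $n\le 4$ directly, and the case $n=4$ is then excluded because it forces $k=2=n-k$; and your uniform treatment of $\mathfrak S_n$ and $\mathfrak A_n$ is slightly stronger than needed, since any block for $\mathfrak S_n$ is a block for the subgroup $\mathfrak A_n$, so primitivity for $\mathfrak A_n$ already implies it for $\mathfrak S_n$.
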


Given this primitivity result, the approach of Iwasawa allows us
to understand the normal subgroups of the symmetric and alternating groups.
We will only need to use the cases $k=2$, $k=3$ and~$k=4$.

\subsection{}\label{ss.prim-2}
Let us first consider the case $k=2$.
For any 2-element subset $x=\{a,b\}$ of~$X$,
let us consider the subgroup~$T(x)$ generated by the transposition~$(a\,b)$:
it is commutative of order~$2$;
the relation $(g\cdot a\, g\cdot b)=g (a\,b)g^{-1}$ implies that
these subgroups satisfy the relation $T(g\cdot x)=g T(x) g^{-1}$;
and since $\mathfrak S_n$ is generated by all transpositions,
they generate the symmetric group.
Consequently, Iwasawa's criterion implies that
if this action  is primitive, then any normal subgroup~$N$
of~$\mathfrak S_n$ such that $N\neq\{e\}$ 
contains~$D(\mathfrak S_n)$, which as we have
seen, is equal to~$\mathfrak A_n$.
Since $\mathfrak S_n/\mathfrak A_n$ has order~$2$,
the only subgroups of~$\mathfrak S_n$ that contain~$\mathfrak A_n$
are $\mathfrak A_n$ and $\mathfrak S_n$. 

What about the primitivity assumption?
Note that the action of~$\mathfrak S_n$ on~$X^{[2]}$
is not 2-fold transitive, because one cannot map $\{1,2\}$ and $\{1,3\}$
to the sets $\{1,2\}$ and $\{3,4\}$. 
Let us observe that it is nevertherless primitive; 
here, we will use that $2<n-2$, that is, $n > 4$.
\cite[\S2.5.1]{Wilson-2009} shows that the fixator of any element of~$X^{[2]}$
is a maximal subgroup, and we will discuss this
in greater generality in the next section,
but let me tell right now the following proof
as explained to me by G.~Chenevier.

Let $B$ be an imprimitivity block of~$X^{[2]}$, and let $\{a,b\}$
be a pair in~$B$.

First assume that $B$ contains another pair of the form~$\{a,c\}$.
Consider $g\in G$ such that $g\cdot a=c$
and $g\cdot b=a$; then $B$ and $g\cdot B$
share the element $\{a,c\}$, so that $g\cdot B=B$;
consequently, $B$ contains the pair~$\{g\cdot a, g\cdot c\}=\{c, g\cdot c\}$,
hence all pairs of the form~$\{c,d\}$.
Redoing the argument from $\{a,c\}$ and $\{c,d\}$
we deduce that $B$ contains any pair, hence $B=X^{[2]}$.

Assume then that $B$ contains a pair $\{c,d\}$ 
which is disjoint from~$\{a,b\}$. Since 
$n\geq 5$, we may consider  a fifth element~$e$ in~$X$;
let us prove that $\{c,e\}\in B$.
Indeed, there exists $g\in\mathfrak S_n$
which maps $a$ to~$a$, $b$ to~$b$, $c$ to~$c$ and~$d$ to~$e$,
hence $\{a,b\}$ to itself, and $\{c,d\}$ to~$\{c,e\}$;
then $B$ and~$g\cdot B$ have $\{a,b\}$ in common, so that $g\cdot B=B$
and $\{c,e\}\in B$. In particular, $B$
contains two pairs $\{c,d\}$ and $\{c,e\}$ whose supports
are not disjoint and the first part of the argument implies that $B=X^{[2]}$.

We thus obtain the following result (also a consequence
of theorem~\ref{theo.an-simple}).
\begin{prop}
For $n\geq 5$, the normal subgroups of~$\mathfrak S_n$
are $\{e\}$, $\mathfrak A_n$ and $\mathfrak S_n$.
\end{prop}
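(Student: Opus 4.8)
The plan is to show that the three subgroups listed exhaust all normal subgroups. That $\{e\}$, $\mathfrak A_n$ and $\mathfrak S_n$ are themselves normal is immediate: the first and last trivially, and $\mathfrak A_n$ because it is the kernel of the signature homomorphism, hence of index~$2$. So the real content is that a normal subgroup~$N$ of~$\mathfrak S_n$ can be nothing else, and I would prove this by feeding the action on~$X^{[2]}$ into the Iwasawa criterion exactly as prepared in~\S\ref{ss.prim-2}.

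So let $N$ be a normal subgroup of~$\mathfrak S_n$ and suppose $N\neq\{e\}$; I must show that $N\supseteq\mathfrak A_n$. First I would observe that $N$ acts nontrivially on~$X^{[2]}$: since $0<2<n$, the action of~$\mathfrak S_n$ on~$X^{[2]}$ is faithful, so any element $g\in N$ with $g\neq e$ moves some $2$-element subset. Next, the action of~$\mathfrak S_n$ on~$X^{[2]}$ is primitive: this is the case $k=2$ of proposition~\ref{prop.maximal}, applicable because $0<2<n-2<n$ for $n\geq 5$, and it is also exactly what the block argument of~\S\ref{ss.prim-2} establishes. As noted earlier, primitive actions are quasiprimitive.

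With quasiprimitivity in hand, I would apply the Iwasawa criterion (theorem~\ref{theo.iwasawa}) to the family of subgroups $T(x)=\langle (a\,b)\rangle$ attached to each pair $x=\{a,b\}$: these are commutative, transform correctly under conjugation via $T(g\cdot x)=gT(x)g^{-1}$, and generate $\mathfrak S_n$, as recalled in~\S\ref{ss.prim-2}. The criterion then yields $N\supseteq D(\mathfrak S_n)=\mathfrak A_n$. Since $\mathfrak A_n$ has index~$2$ in~$\mathfrak S_n$, the only subgroups of~$\mathfrak S_n$ containing it are $\mathfrak A_n$ and $\mathfrak S_n$, so $N$ is one of the three listed subgroups, as claimed.

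The argument is really an assembly of pieces already in place, so no genuinely hard step remains; the one point that deserves care is the opening reduction — that a nontrivial normal subgroup acts nontrivially on~$X^{[2]}$ — which is precisely where the faithfulness of the action (and hence the hypothesis $2<n$) enters, and without which the passage to quasiprimitivity would yield nothing.
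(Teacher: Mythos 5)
Your proof is correct and follows essentially the same route as the paper: the action of~$\mathfrak S_n$ on~$X^{[2]}$, the transposition subgroups~$T(x)$ fed into the Iwasawa criterion, primitivity from the block argument of~\S\ref{ss.prim-2}, and the index-$2$ conclusion. Your explicit remark that faithfulness of the action on~$X^{[2]}$ is what guarantees a nontrivial~$N$ acts nontrivially is a point the paper makes only in passing, but it is the same argument.
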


\subsection{}
We now pass to $k=3$.
For any 3-element set $x=\{a,b,c\}$ in~$X$, 
we consider the alternating group~$T(x)$ of these three elements,
viewed as a subgroup of $\mathfrak A_n$; it is
the subgroup generated by the 3-cycle $(a\,b\,c)$.
As above, the relations $T(g\cdot x)=gT(x)g^{-1}$ hold,
and these subgroups generate the alternating group.
Assuming that the action of~$\mathfrak A_n$
on~$X^{[3]}$ is primitive, we deduce from Iwasawa's criterion
that any normal subgroup of~$\mathfrak A_n$
either is trivial, or contains~$D(\mathfrak A_n)$;
in other words, $\mathfrak A_n$ is simple.

We shall see in the next section that the primitivity
condition holds for $n\neq 6$ (and why it does not for $n=6$),
so the case $n=6$ requires another argument.

\subsection{}
For this, let us consider $k=4$.
For any 4-element set $x=\{a,b,c,d\}$ in~$X$,
let us consider Klein's Vierergruppe~$V(x)$
in the alternating group of these four elements,
viewed as a subgroup of $\mathfrak A_n$.
It is commutative of order~4, and consists
of the identity and of the three “double transpositions”
$(a\,b)(c\,d)$, $(a\,c)(b\,d)$ and $(a\,d)(b\,c)$.
This is already an intrinsic definition of~$V(x)$
(permutations with support in~$x$ whose cycle type is either
empty or $(2,2)$); it can also be defined as the derived subgroup of
the alternating group on these four elements.
Consequently, the relations $V(g\cdot x)=g V(x)g^{-1}$ hold.
Let us show that these subgroups~$V(x)$ generate~$\mathfrak A_n$;
the argument will use that $n\geq 5$.
We start from the remark that $\mathfrak A_n$ consists
of permutations which are products of an even number
of transpositions. If two successive transpositions
in such a product have disjoint supports, they belong to some~$V(x)$.
Otherwise, if their supports share an element~$a$,
say $(a\,b)(a\,c)$, then 
using that $n\geq 5$,
we can insert a cancelling product $(d\,e)(d\,e)$,
so that $(a\,b)(d\,e)$ and $(d\,e)(a\,c)$ belong to
subgroups of the form~$V(x)$.

Applying Iwasawa's criterion,
this construction shows that the alternating group~$\mathfrak A_n$
is simple  as soon as 
the action of~$\mathfrak A_n$ on~$X^{[4]}$ is primitive.

\subsection{}
We note that a variant of these arguments leads 
to a reasonably simple proof
of the simplicity of~$\mathfrak A_5$. Indeed, by taking
complements, the action of~$\mathfrak A_n$ on~$X^{[k]}$
is isomorphic to the action on~$X^{[n-k]}$. When $n=5$,
the case $k=4$ is reduced to the case $k=1$ and it suffices
to prove that the action of~$\mathfrak A_5$ on~$X$ is primitive.
To that aim, it suffices to observe that the action of~$\mathfrak A_5$
is 2-transitive. (In fact,  it is even 3-transitive.) 

\section{Primitivity and maximal subgroups}

To conclude the proof of theorem~\ref{theo.an-simple},
it remains to explain the proof of proposition~\ref{prop.maximal}.
The fixator of the element $\{1,\dots,k\}$ of~$X^{[k]}$ 
is the intersection
of~$\mathfrak A_n $ with the subgroup $\mathfrak S_k\times\mathfrak S_{n-k}$
associated with the partition of~$\{1,\dots,n\}$
in $\{1,\dots,k\}$ and $\{k+1,\dots,n\}$.
Since the action of~$\mathfrak A_n$ on~$X^{[k]}$ is transitive,
lemma~\ref{lemm.primitive-maximal} reduces us  to prove
that this subgroup is a maximal subgroup of~$\mathfrak A_n$.

In this way, we note that the hypothesis $n\neq 2k$ is really
necessary for this proposition: 
the subgroup $\mathfrak S_n\times\mathfrak S_n$
of $\mathfrak S_{2n}$ is not maximal, it is a subgroup of index~2
of the stabilizer of the partition $\{\{1,\dots,n\},\{n+1,\dots,2n\}\}$,
a group also described as the wreath product $\mathfrak S_n\wr (\Z/2\Z)$. 

\subsection{}
In \S\ref{ss.prim-2},
we saw an elementary proof of proposition~\ref{prop.maximal}
for $k=2$ and $n\geq 5$,
and it seems likely that an elementary proof exists for any~$k$.
R.~Rouquier gave me one that works for $k=3$ and $n\geq 7$.
However, I want to describe another approach, explained to
me by M.~Liebeck, that I think emphasizes the status of 
that proposition within finite group theory.

One of the first treatises on group theory is that of \cite{Jordan-1870}.
Then groups were “permutation groups”, permuting letters,
(henceforth algebraic expressions on these letters),
or, since the connection with the Galois theory of equations was explicit,
roots of a polynomial equation.

It had been observed that the symmetric group on $n$~letters
is $n$-fold transitive --- almost by definition, 
given two systems of distinct elements in $\{1,\dots,n\}$,
$x_1,\dots,x_n$ and $y_1,\dots,y_n$, say, 
there is a permutation~$g$ such that $g\cdot x_i=y_i$ for all~$i$,
and $g$ is even unique.

Only slightly less obvious was the fact that the alternating
group on $n$~letters is $(n-2)$-fold transitive:
given distinct systems $x_1,\dots,x_{n-2}$ and $y_1,\dots,y_{n-2}$,
there are exactly two permutations~$g,g'$ such that $g\cdot x_i=g'\cdot x_i=y_i$ for all~$i$, and $g'g^{-1}$ is the permutation that exchanges
the two elements of~$\{1,\dots,n\}$
not in $\{y_1,\dots,y_{n-2}\}$; in particular, one of them is even
and the other is odd.

It had also been observed that beyond these two cases, 
a permutation subgroup on $n$~letters 
% which does not fix identically one of them
has to act much less transitively and 
19th century mathematicians 
proved many theorems that aimed at quantifying this limit.
For example, Mathieu had proved that unless it contains
the alternating group, a subgroup of~$\mathfrak S_n$
isn't $n/2$-fold transitive, 
while \citet{Jordan-1872a} proved that it isn't $m$-fold transitive 
if $n-m$ is a prime number $>2$.

As explained in~\cite{Cameron-1981},
once the classification of simple finite groups had been achieved,
it could be checked on the list that a 6-fold transitive
subgroup of $\mathfrak S_n$
must be symmetric or alternating.

Parallel to the classification is the understanding
of all \emph{maximal} subgroups of a given finite simple group.
In the case of the alternating group,
an explicit list has been provided independently by M.~O'Nan and L.~Scott.
As remarked by~\cite{Cameron-1981},
this question is closely related to the description of all subgroups
of the symmetric group~$\mathfrak S_n$ which 
act primitively on~$\{1,\dots,n\}$.

This classification theorem takes the given form: Let $G$ be a strict
subgroup of $\mathfrak A_n$ or~$\mathfrak S_n$; then
$G$ is conjugate to a subgroup of one of six types
of which the first three take the form:
\begin{enumerate}\def\theenumi{\alph{enumi}}\def\labelenumi{(\theenumi)}
\item A product $\mathfrak S_m \times\mathfrak S_{n-m}$, where $0<m<n$
--- the \emph{intransitive case}.
\item The “wreath product” $\mathfrak S_m \wr \mathfrak S_p$, where $n=pm$,
namely the subgroup generated by 
the product of $p$~symmetric groups acting on $p$~disjoint sets 
of $m$~letters (isomorphic to $\mathfrak S_m \times \dots\times \mathfrak S_m)$,
and a permutation that permutes cyclically these $p$~sets
--- the \emph{imprimitive case}; 
\item An \emph{affine group} of an $\F_p$-vector space of dimension~$d$,
where $n=p^d$ is the power of a prime number.
\end{enumerate}
It applies in particular to maximal subgroups, and
\cite{LiebeckPraegerSaxl-1987} established the converse assertion,
deciding which of the groups of this list are maximal.
That case~(a) is maximal when $m\neq n-m$ is exactly the
statement of proposition~\ref{prop.maximal}.
However, when $n=2m$, case~(a) is not maximal but case~(b) gives
the corresponding maximal case. For $n=4$, for example,
the subgroup given by~(b) has order~8,
hence is a 2-sylow subgroup of~$\mathfrak S_4$,
while the subgroup $\mathfrak S_2\times\mathfrak S_2$ has order~4.

Cases of the form~(c) were of particular interest to Galois,
who proved that they appear for the Galois groups
of irreducible equations of prime degree which are solvable by radicals.
In other words, solvable and transitive subgroups of~$\mathfrak S_p$
can be viewed, up to conjugacy, as a group of permutations 
of the form $x\mapsto ax+b$ on~$\F_p$, for $a\in\F_p^\times $ and $b\in\F_p$.
Since the identity is the only permutation of that form that fixes
two elements, Galois obtains that an irreducible equation of prime
degree is solvable by radicals if and only if 
any of its roots can be expressed rationally by any two of them.

Galois also defined primitive algebraic equations 
which correspond exactly to the case where the Galois group
acts primitively on their roots. In the solvable case,
he proved that the degree has to be a power~$p^n$ of a prime number~$p$
and, up to an enumeration of the vector space~$\F_{p}^n$,
the Galois group~$G$ is a subgroup of the group
of permutations of the form $x\mapsto Ax+b$, for $A\in\mathrm{GL}(n,\F_p)$
and $b\in\F_p^n$, that contains all translations $x\mapsto x+b$.
Moreover, the subgroup~$G_0$ of $\mathrm{GL}(n,\F_p)$
consisting of all elements of~$G$ of the form $x\mapsto Ax$
has no nontrivial invariant subspace.
The interested reader shall find more details on this fascinating
story in chapter~14 of~\cite{Cox-2012}.

\subsection{}
But let us go back to the promised proof of proposition~\ref{prop.maximal}.
Let $G$ be a subgroup of $\mathfrak A_n$
strictly containing $(\mathfrak S_k\times \mathfrak S_{n-k})\cap \mathfrak A_n$,
where $0<k<n$ and $n\neq 2k$.
We need to prove that $G$ coincides with~$\mathfrak A_n$.
By symmetry, we may assume that $k<n-k$.
The case $k=1$ is easy. Indeed, the action of $\mathfrak A_n$
on~$\{1,\dots,n\}$ is $(n-2)$-fold transitive, hence
it is 2-fold transitive, because $n\geq 4$, hence it is primitive.
We now assume that $2\leq k$; then $n\geq 5$.

A theorem of~\cite[Note C to \S398, page 664]{Jordan-1870} asserts
that a primitive subgroup of~$\mathfrak S_n$ that
contains a cycle of prime order~$p$ is at least $(n-p+1)$-fold transitive.
When $p=2$, we get that this subgroup is $(n-1)$-fold transitive,
hence it has to be the whole~$\mathfrak S_n$, while
when $p=3$, it is $(n-2)$-fold transitive, and it is not too
difficult to deduce that it contains~$\mathfrak A_n$.
Since $1\leq k<n-k<n$ and $n\geq 5$, we have $n-k\geq 3$
and our subgroup~$G$ contains a 3-cycle. To conclude, it remains
to establish that it acts primitively on~$\{1,\dots,n\}$.

One first proves that $G$ acts transitively on~$\{1,\dots,n\}$.
In fact, $G$ contains the subgroups $\mathfrak S_k$ and~$\mathfrak S_{n-k}$;
in particular, it acts transitively on the elements of each subset
$\{1,\dots,k\}$ and $\{k+1,\dots,n\}$, hence it has at most two orbits.
But since it strictly
contains $(\mathfrak S_k\times \mathfrak S_{n-k})\cap \mathfrak A_n$,
it cannot leave $\{1,\dots,k\}$ and $\{k+1,\dots,n\}$ invariant. 

Arguing as for transitivity, $G$ acts $k$-fold transitively
on $\{1,\dots,k\}$ and $(n-k)$-fold transitively on~$\{k+1,\dots,n\}$;
since $2\leq k<n-k$, it acts in particular 2-fold transitively,
hence primitively, on both of these sets.

We consider imprimitivity blocks~$B$ for the action of~$G$,
assuming that they have at least two elements and are distinct
from~$\{1,\dots,n\}$.

First observe that $B$ cannot contain~$\{k+1,\dots,n\}$, because its 
translates $g\cdot B$, for $g\in B$ such that $g\cdot B\neq B$, 
would have to be contained in~$\{1,\dots,k\}$, 
which is impossible since $k<n-k$. In particular,
$B$ meets~$\{k+1,\dots,n\}$ in at most one element.
If it is disjoint from $\{k+1,\dots,n\}$, it is contained
in~$\{1,\dots,k\}$.
Since $G$ acts primitively on~$\{1,\dots,k\}$,
one then has $B=\{1,\dots,k\}$.
Consider an element~$g$ of~$G$ which does not stabilize $\{1,\dots,k\}$.
Then $g\cdot B$ is a block distinct from~$B$, hence disjoint,
so that $g\cdot B$ is a block contained in~$\{k+1,\dots,n\}$.
By primitivity, $g\cdot B=\{k+1,\dots,n\}$, contradicting
the beginning of the proof.

In particular, there are elements $a\in\{1,\dots,k\}\cap B$
and $b\in\{k+1,\dots,n\}\cap B$. To conclude the proof
by a contradiction,
it suffices to establish that $B$ contains \mbox{$\{k+1,\dots,n\}$}.
So let $c\in\{k+1,\dots,n\}$ and 
consider an element $g\in G$ that fixes $\{1,\dots,k\}$ 
such that $g\cdot b= c$.
Then $g\cdot B$ and $B$ both contain~$a$, hence $g\cdot B=B$,
hence $c\in B$, as was to be shown.

\section{Intermezzo: conjugacy classes in symmetric groups}

At the end, the proof of the simplicity theorem 
of the alternating group $\mathfrak A_6$ required 
a discussion of the Klein subgroup of $\mathfrak A_4$. 
When we discuss this group between colleagues, possibly in class,
the proof fact that it is indeed a subgroup usually boils down to
a mere: “one checks that…”. 

Of course, such an argument is not sufficient for the computer,
and I spent some time trying to imagine how we should prove such
facts in the computer. 
The proof I resorted to happened to be fun, 
nevertheless slightly sophisticated.

Let $X=\{a,b,c,d\}$ be a set with four elements, 
and let $V$ be the subset of $\mathfrak S_X$ consisting 
of the identity and of all double transpositions.
In order to prove that $V$ is a subgroup of $\mathfrak S_X$,
I prove that $V$ is the only 2-sylow subgroup of $\mathfrak A_X$.
The proof runs as follows, in which we  consider an arbitrary
 2-sylow subgroup~$S$ of $\mathfrak A_X$.
\begin{itemize}
\item Since $\mathfrak S_4$ has cardinality~$4!=24$,
the alternating group $\mathfrak A_4$ has cardinality~12, 
and $S$ has cardinality~4.
\item The order of any element~$g$ of $S$ divides~4;
since its entries thus divide~4,
the cycle type of~$g$ belongs to $()$, $(2)$, $(2,2)$ or $(4)$.
Since the second and last cases give odd permutations, 
we have $g\in V$;
\item Now the number of permutation of a given cycle type
in a symmetric group can be computed explicitly, more on this below, 
and the computation shows that $V$ has 4~elements; 
\item Since $S$ and $V$ both have 4~elements, and $S\subseteq V$,
this proves $V=S$, as claimed.
\end{itemize}

The computation of the number of permutations of a given cycle type in
the symmetric group~$\mathfrak S_X$
is by itself a classic and important result in combinatorics of finite 
permutation groups. We return to the general case of a finite set~$X$,
let $n$ be its cardinality, 
and consider a partition~$\pi$ of the integer~$n$;
let us write $m_i$ for the number of parts equal to~$i$. 
A permutation of cycle type~$\pi$
takes the form $(a_1)(a_2)\dots(a_{m_1})(b_1\, b'_1)(b_2\,b'_2)\dots (b_{m_2}\,b'_{m_2})\dots $: $m_1$~cycles of length~1, $m_2$~cycles of length~2, etc.
In order to compute the number of permutations
of cycle type~$\pi$, we just have to fill the letters with distinct
elements of~$X$, which apparently makes for~$n!$ permutations.
However, for each cycle of length~$i$,
only the cyclic ordering of the elements matters,
so we have to divide the result by $\prod i^{m_i}$.
Moreover, the order in which we write the $m_i$~cycles of given length~$i$
does not matter, so the result needs to be further divided
by $\prod m_i!$.
Finally, the number of permutations of cycle type~$\pi$ is 
${n!}/{\prod i^{m_i} \prod m_i!}$.

There is however a more conceptual, and more precise, way 
to prove this formula. 
Fix any permutation~$g$ which has cycle type~$\pi$.
Since the number we wish to compute is the cardinality
of the orbit of~$g$ under the conjugation action, it suffices
to prove that the cardinality of the centralizer~$Z_g$ of~$g$
is equal to $\prod i^{m_i}\prod m_i!$.

If $h\in Z_g$, then $hgh^{-1}=g$, so that the cycles of~$hgh^{-1}$
are those of~$g$. In other words, $Z_g$ acts by conjugation
on the set of cycles of~$g$, respecting their lengths.
This gives a group morphism $\phi\colon Z_g \to \prod_i \mathfrak S_{m_i}$.

This morphism is surjective.  In fact, one can even show that $\phi$ has a section. Indeed, fix, for each cycle~$c$ of~$g$, an element $a_c$ in~$c$;
then, for any permutation~$\sigma$ of the set of cycles of~$g$
which preserves their lengths, there is a unique element~$h_\sigma$ of~$Z_g$
such that $h_\sigma(a_c)= a_{\sigma(c)}$ for all~$c$, and the map $\sigma\mapsto h_\sigma$ is a group morphism.
% h_\tau(h_\sigma(a_c))=h_\tau(a_{\sigma(c)}=h_{\tau(\sigma(c))

Now, the kernel of~$\phi$ is the subgroup of all elements~$h\in Z_g$
such that $hch^{-1}=c$ for all cycles~$c$ of~$g$.
Necessarily, $h$ stabilizes the support of each such~$c$,
so it maps~$a_c$ to some power iterate of~$a_c$ under~$g$;
fix~$k_c\in\Z$ (modulo the cardinality~$n_c$ of the support of~$c$) 
such that $h(a_c)=g^{k_c}(a_c)=c^{k_c}(a_c)$; using
the fact that~$c$ is a cycle, it follows that~$h$ acts
like~$c^{k_c}$  on the support of~$c$. Finally, we see that
$h$ is the product of these powers $c^{k_c}$. 
In other words, $\ker(\phi)$ is a product of cyclic groups,
$\prod_c (\Z/k_c\Z)$, which we rewrite as 
$ \prod_i (\Z/i\Z)^{m_i}$, since $m_i$ is
the number of cycles~$c$ such that $n_c=i$.
In particular, the order of~$\ker(\phi)$ is equal
to $\prod i^{m_i}$.

Finally, $\Card(Z_g) = \Card(\im(\phi))\Card(\ker(\phi))
= \prod_i i^{m_i} \prod_i m_i!$, as was to be shown.

\section{Simplicity of classical groups}

\subsection{}
The simplicity criterion is not explicitly stated 
by~\cite{Iwasawa-1941}, but it is directly proved and applied   
in the case of the projective special linear group~$\mathrm{PSL}(n,F)$ 
acting on the projective space $\mathbf P_{n-1}(F)$
of lines in~$F^n$.  Unless $F$ has~$2$ or~$3$ elements,
a linear algebra argument shows that this action is 2-fold transitive,
hence primitive.

For every line~$\ell\in\mathbf P_{n-1}(F)$, consider
the subgroup $T(\ell)$ of transvections with respect 
to~$\ell$, namely the elements $g\in\mathrm{SL}(n,F)$
such that the range of $g-\id$ is contained in~$\ell$.
Using that $\mathrm{SL}(n,F)$ is generated by transvections,
we see that they give rise to a datum as in Iwasawa's criterion.
Consequently, any normal subgroup of~$\mathrm{SL}(n,F)$
which acts nontrivially on~$\P_{n-1}(F)$ contains
the commutator subgroup of~$\mathrm{SL}(n,F)$,
which is known to be $\mathrm{SL}(n,F)$ itself.
Finally, the only elements of~$\mathrm{SL}(n,F)$
which  act trivially on~$\P_{n-1}(F)$
are the homotheties, and they form the center of~$\mathrm{SL}(n,F)$,
a finite subgroup isomorphic to the set of $n$th roots of unity in~$F$.
As a consequence, the quotient $\mathrm{PSL}(n,F)
=\mathrm{SL}(n,F)/Z(\mathrm{SL}(n,F))$
is a simple group.

\subsection{}
This reasoning can also be applied for other cases of geometric groups.
In his paper, Iwasawa himself indicates that 
the same method works
for the symplectic group $\mathrm{PSp}(2n,F)$
(“complex projective groups” in the earlier terminology) acting
on the projective space $\mathbf P_{2n-1}(F)$.
Iwasawa does not explicitly consider the notion 
of a primitive action in his paper: his arguments 
are only spelt out for a 2-fold transitive action.
However, he mentions in a footnote that 
while the action of the symplectic group on $\mathbf P_{2n-1}(F)$
is not 2-fold transitive, it is quasiprimitive,
and this suffices for his proof.  On the other
hand, \cite{King-1981a} established that the stabilizers
of this action are maximal subgroups, so that this action is even primitive.

In fact, it seems that the simplicity of the appropriate
groups of geometric transformations can all 
be established in this way.

I find it remarkable how much this method,
that relates the simplicity of a group with the structure
of its maximal subgroups, is absolutely in par with the point
of view of Jordan and early group theorists!

\section{Remarks regarding the formalized proof and the formalization process}

As recalled in the introduction,
implementation of mathematical proofs in computers is not a very recent
activity, but the \texttt{Lean/mathlib} movement
puts us at a crossroad in so that it makes
an indefinitely-extending library of formalized proofs
conceivable. 
Built on the experiment described in this note,
I would like to risk myself to enumerating some remarks about
this prospect, the hopes and fears it may rise.

Formalization of mathematical proofs has many goals.

Some of its proponents raise the idea that it will make us 
truly certain of the validity of the new mathematical theorems we prove.
The idea here is that the traditional peer review seems to reach its limits,
both for mathematical and sociological reasons.

Some papers are simply so complicated that nobody can reasonably
claim to have checked their validity with absolute certainty.
This was the case for Hales's proof of the Kepler conjecture,
before he, leading a team of 21 mathematicians, formalized that proof.
In some sense, this is still the case 
for the classification of finite simple groups, whose size
and technicality makes it inacessible to most of
the mathematical community.

On the other hand, most research papers are of an apparently smaller size, 
but the sociology of the fields evolved. The increasing importance
of research grants for the funding of research, if not for obtaining
permanent academic positions, led us to a stage where  the collectivity
wants their papers published more quickly that it can assert their validity,
if not just read.
As a consequence, papers are reviewed too quickly, 
their publication is conditioned to preliminary opinions, 
leading to all imaginable biases, 
and new journals are created to host this ever growing mathematical
litterature. 

If we could check ourselves the validity of our proofs within
formalization software,  and deliver it at the same time
we submit a paper, 
it is likely that this paper could have been written in a different way:
not just to quickly convince a referee that the proofs are true,
but spending more time than we presently do
to explain the statements, 
their interest, their context,
the path that led to their proof,
as well as aiming at a possibly larger audience.

For this to happen, we need a \emph{huge} archive 
of mathematical proofs written in a common language,
with common definitions.  
The experience of the Bourbaki books suggests that something
is possible, but it also reminds that not all 
mathematicians will be willing to comply to the mathematical writing
style of other.

If the style of Bourbaki has been sometimes defined as too abstract, 
it is nothing in comparison with that of \texttt{mathlib}. Indeed,
in order to avoid repeating proofs, the authors of that library
make a permanent effort to put its definitions and statements 
in a (natural, but possibly frightening) generality. Linear algebra
starts with a discussion of semimodules over monoids, 
so that the relevant part applies to more exotic contexts,
such as $(\max,+)$-algebras. (Bourbaki 
made a similar step when they defined “groups with operators”,
but their notion does not seem to be commonly used.)
In complex analysis,
the characterization of analytic functions as functions
which are differentiable in the complex sense is proved
using the Kurzweil--Henstock integral, because that allows us
to avoid any Lebesgue integrability assumption on the derivative.

One of the difficulties by working with many simultaneous group 
actions
is that \emph{type theory}, the inner language of \texttt{Lean},
does not allow the many abuses of language that we do while
doing mathematics, without even thinking about it. 
Take, for example, a group $G$ acting on a set~$X$, 
a subset $A$ of~$X$, and a point $a\in X\setminus A$. Then we can consider
the fixator~$G_A$ of~$A$ in~$X$, and its action on $X\setminus A$,
then the fixator~$G_{A,a}$ of~$a$ in~$G_A$, and its
action on $X\setminus(A\cup\{a\})$,
which --- obviously --- coincides with the 
action of the fixator of $A\cup\{a\}$ on its complement.
However, these actions look sufficiently different to \texttt{Lean},
syntaxically, and it is not able to identify automatically.
The suggestion I received 
on the \emph{Zulip} discussion blackboard
was that I should not even try to identify them, 
but that it was sufficient to relate them  through \emph{equivariant maps}.
If groups $G$ and~$H$ act on~$X$ and~$Y$,
and $\phi\colon G \to H$ is a morphism  of groups,
then a $\phi$\nobreakdash-equivariant map from $X\to Y$ is just a map $f\colon X\to Y$ 
such that $f(g\cdot x)=\phi(g)\cdot f(x)$ for all $g\in G$ and $x\in X$.
Then several basic results allow us to transfer primitivity
or transitivity properties from the action of~$G$ on~$X$
to the action of~$H$ on~$Y$, or vice versa. This is an example
of an elementary definition, with basic companion results, 
that we probably wouldn't dare introducing explicitly in a
standard mathematical discussion --- probably too trivial for specialists,
but already too obscure for beginners.
Learning to appreciate the relevance of introducing such abstract concepts
takes some time, requires a community of knowledgeable mathematicians,
as well as the will to follow their point of view.

% This effort also leads to put forward some notions which we tend to avoid
% when we teach.  
% In linear algebra again, linear maps $f\colon V\to W$ are defined
% as particular cases of additive maps which are semilinear with respect
% to a morphism $\phi\colon E \to F$ between the respective fields of scalars,
% namely, $f(a\cdot v)=\phi(a)\cdot f(v)$ for all $a\in E$ and $v\in V$.  
% Following a suggestion on the \emph{Zulip} discussion blackboard,
% I could observe that a similar move helped me  formalize some
% (mathematically simple) results on group actions by explicitly introducing
% \emph{equivariant maps}. If groups $G$ and~$H$ act on~$X$ and~$Y$,
% and $\phi\colon G \to H$ is a morphism  of groups,
% a $\phi$-equivariant map from $X\to Y$ is just a map $f\colon X\to Y$ 
% such that $f(g\cdot x)=\phi(g)\cdot f(x)$ for all $g\in G$ and $x\in X$.
% We copied that definition in listing~\ref{lst:equivariant},
% as well as a convenient notation to introduce them to the software.
% (The reader may observe that it even does not assume that $G$ and $H$
% are groups, only that they “act”…)
% 
% \begin{lstlisting}[caption={Definition of equivariant maps, and introduction
% of a notation for them
% % \mllink\mllink{number_theory/well_approximable.lean\#L191}
% },label=lst:equivariant,captionpos=t,float=htb]
% /-- Equivariant maps -/
% structure equivariant_map {M N : Type*} (φ : M → N)
%   (α : Type*) (β : Type*) [has_smul M α] [has_smul N β] :=
% (to_fun : α → β)
% (map_smul' : ∀ (m : M) (a : α), to_fun (m • a) = φ(m) • to_fun (a))
% 
% notation α ` →ₑ[`:25 φ:25 `] `:0 β:0 := equivariant_map φ α β
% notation α ` →[`:25 M:25 `] `:0 β:0 := equivariant_map (@id M) α β
% \end{lstlisting}
% 
 
On the other hand, as the myth of the Babel tower
should remind us, leaning towards some ever-expanding generality
comes at a high risk, that the full edifice collapses.
My own experiment has been agreeable enough to me
to sincerely wish that the community skillfully avoids that risk.

\bibliography{jordan}
\end{document}